\theoremstyle{plain}
  \newtheorem{theorem}{Theorem}[section]
  \newtheorem{corollary}{Corollary}[section]
  \newtheorem{lemma}{Lemma}[section]
  \newtheorem{proposition}{Proposition}[section]
  \newtheorem*{theorem*}{Theorem}
\theoremstyle{definition}
\crefname{theorem}{Theorem}{Theorems}
\crefname{theorem*}{Theorem}{Theorems}
\crefname{lemma}{Lemma}{Lemmas}
\crefname{proposition}{Proposition}{Propositions}
\crefname{definition}{Definition}{Definitions}
\crefname{remark}{Remark}{Remarks}
\title{On polynomial systems of equations in square matrices filled with natural numbers}
\author{Mihai Prunescu}
\date{}
\begin{document}

\maketitle

\begin{abstract}
The positive existential theories of the sets $M_n(\mathbb N)$ without parameters build an inclusion lattice isomorhic with the lattice of divisibility. All these sets are algorithmically undecidable. In further sections some easier observations are made, like the undecidability of diophantine equations with coefficients in $\mathbb M_n(\mathbb Z)$.

Mathematics Subject Classification:  11U55, 11D72, 11C20, 15B48
\end{abstract}

\section{Introduction}\label{sect:introduction} 

In a list of problems moderated by B. Poonen and T. Scanlon and written down by J. Demeyer, question number 7 (M. Davis) \cite{Demeyer} asks about the existence of decision procedures for non-commutative polynomial equations over structures like the field of quaternions, its ring of integers, or matrix rings like $M_n(\mathbb Q)$ or $M_n(\mathbb Z)$. This paper deals mainly with the set $M_n(\mathbb N)$. This set does not build a ring, but is somewhat analogous with the set of natural numbers $\mathbb N$. In this paper $0$ is considered a natural number. 

The ring of {\bf non-commutative polynomials} $\mathbb Z[X_1,X_2, \dots, X_k]$ is built exactly like the usual ring of polynomials, but contains more monomials. So, we will explicitly use monomials like $AYA$, which is different from $A^2Y$ and also from $YA^2$. Also, we will explicitly use the {\bf commutator} polynomial $XY-YX$ which is different from the polynomial zero.  

The Theorem of Matiyasevich \cite{Matiyasevich} says that given $P \in \mathbb Z[X_1,X_2, \dots, X_k]$, it is undecidable whether 
$$\exists \, X_1, X_2, \dots, X_k \in \mathbb N\,\,\,\,P(X_1,X_2, \dots, X_k)=0.$$
We will prove a weaker result for the structure $(M_n(\mathbb N), +, \cdot, 0, 1)$ with $n > 1$ instead of $(\mathbb N, +, \cdot, 0, 1)$. It is undecidable whether systems of non-commutative equations have solutions in the set $M_n(\mathbb N)$. Here $0$ is interpreted as the all-zero matrix, while $1$ is interpreted as the unit matrix $I_n$. This matrix satisfies $I_n(a,b) = 1$ iff $a=b$ and $I_n(a,b) = 0$ iff $a \neq b$. Integers $k \in \mathbb N$ arizing as coefficients or as free terms, are always interpreted as $kI_n$. 

An example of diophantine equation solved in $M_2(\mathbb N)$ is shown in \cite{Math1089}. The equation $$AB = 10 \cdot  A + B$$ has the following amazing solution in decimal digits:
$$
\begin{pmatrix}
3 & 4 \\ 8 & 7
\end{pmatrix}
\textcolor{red}
{
\begin{pmatrix}
7 & 2 \\ 4 & 9
\end{pmatrix}
} = 
\begin{pmatrix}
3\textcolor{red}{7} & 4\textcolor{red}{2} \\ 8\textcolor{red}{4} & 7\textcolor{red}{9}
\end{pmatrix}
$$
On the cited internet-site,  other solutions in $M_3(\mathbb N)$ and in $M_4(\mathbb N)$ are shown. In $M_1(\mathbb N)$ the only one non-trivial solution is $A = B = 11$, which unfortunately is not a digit. 

Also, 
$$
{\begin{pmatrix} 1 & 0 \\ 0 & 0 \end{pmatrix}}^k +  {\begin{pmatrix} 0 & 0 \\ 0 & 1 \end{pmatrix}}^k = {\begin{pmatrix} 1 & 0 \\ 0 & 1 \end{pmatrix}}^k 
$$
is a solution of the equation $X^k + Y^k = Z^k$ in $M_2(\mathbb N)$ for all $k \geq 1$.

Solutions for both equations can be extended to solutions in $M_n(\mathbb N)$ for all $n >2$ by considering $n \times n$ matrices $A'$ and $B'$ whose left upper corners are the matrices $A$ and $B$, respectively $X'$, $Y'$ and $Z'$ whose left upper corners are the matrices $X$, $Y$ and $Z$, all other elements in these matrices being $0$. This is possible only because the equations $AB = 10A + B$, respectively $X^k + Y^k = Z^k$, do not contain any variable-free term. Examples of equations with variable-free terms are shown in \cref{sect:divide} and \cref{sect:undec}.

{\bf Systems of equations} are positive existential propositions of the shape:
$$ S \,\,:\,\,\exists \,X_1, \dots, X_k \,\,\,\,P_1(X_1, \dots, X_k) = 0 \wedge \dots \wedge P_s(X_1, \dots, X_k) = 0$$
where $P_1, \dots, P_s \in \mathbb Z[X_1, X_2, \dots, X_k]$ are non-commutative polynomials.

We denote by $Dph_n(\mathbb N)$ the set of systems which  are satisfiable in $M_n(\mathbb N)$. We define similarly the sets $Dph_n(\mathbb Z)$ and $Dph_n(\mathbb Q)$. By Matiyasevich \cite{Matiyasevich}, the sets $Dph_1(\mathbb N)$ and $Dph_1(\mathbb Z)$ are algorithmically undecidable. The decidability of $Dph_1(\mathbb Q)$ is open, see Koenigsmann \cite{Koenigsmann}.

We prove the following results:

\begin{theorem}\label{diophantinetheories}
Between the sets $Dph_n(\mathbb N)$ and $Dph_m(\mathbb N)$  with $m, n \geq 1$ one has the following relations:
\begin{enumerate}
\item $Dph_n(\mathbb N) = Dph_m(\mathbb N) \longleftrightarrow n = m$.
\item $Dph_n(\mathbb N) \subseteq Dph_m(\mathbb N) \longleftrightarrow n \mid m$.
\end{enumerate}
These are also true for the families  $\left (Dph_n(\mathbb Z)\right )$, respectively $\left (Dph_n( \mathbb Q) \right )$.
\end{theorem}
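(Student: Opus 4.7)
My plan is to reduce item~(1) immediately to item~(2): the equality $Dph_n(\mathbb N) = Dph_m(\mathbb N)$ is equivalent to both inclusions, hence by~(2) to $n\mid m$ and $m\mid n$, i.e., to $n=m$. So all the content lies in~(2), which I split into ``$n\mid m$ implies inclusion'' and ``inclusion implies $n\mid m$''.

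For the first direction, I would write $m = nk$ and use the block-diagonal embedding $\iota : M_n(\mathbb N) \to M_m(\mathbb N)$ sending $A$ to $\mathrm{diag}(A,A,\ldots,A)$ with $k$ copies. Since $\iota$ preserves $+$, $\cdot$, $0$ and maps $I_n$ to $I_m$, it commutes with every non-commutative polynomial evaluation, so a solution of any system $S$ in $M_n(\mathbb N)$ transports componentwise to a solution of $S$ in $M_m(\mathbb N)$.

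For the other direction I plan to construct a single witness system $MU_n$, satisfied in $M_n(\mathbb N)$ and satisfiable in $M_m(\mathbb N)$ only when $n\mid m$. Its $n^2$ variables $(E_{ij})_{1\le i,j\le n}$ are required to obey the matrix-unit relations and a completeness equation,
\[
E_{ij}E_{kl} - \delta_{jk} E_{il} = 0 \quad (\text{all } i,j,k,l), \qquad E_{11}+E_{22}+\cdots+E_{nn} - 1 = 0,
\]
which are realized in $M_n(\mathbb N)$ by the standard $0/1$ matrix units. Given any solution in $M_m(\mathbb N)$, the map $(a_{ij})\mapsto \sum_{i,j} a_{ij}E_{ij}$ extends, by a direct computation from the matrix-unit relations, to a unital ring homomorphism $\varphi : M_n(\mathbb Z) \to M_m(\mathbb Z)$. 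The two-sided ideals of $M_n(\mathbb Z)$ are exactly $M_n(d\mathbb Z)$, and since $\varphi(dI_n) = dI_m \neq 0$ for $d\neq 0$, the kernel of $\varphi$ is trivial. Tensoring with $\mathbb Q$ makes $\mathbb Q^m$ a unital left $M_n(\mathbb Q)$-module, which by Wedderburn decomposes into copies of the unique simple module $\mathbb Q^n$; hence $m = rn$ for some $r\geq 1$, i.e., $n\mid m$.

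Two small points to check along the way. One does not need to posit $E_{ii}\neq 0$: if $E_{11}=0$, then $E_{1j} = E_{11}E_{1j} = 0$ and $E_{i1} = E_{i1}E_{11} = 0$, forcing $E_{ij} = E_{i1}E_{1j} = 0$ for all $i,j$ and contradicting $\sum_i E_{ii} = I_m$. The $\mathbb Z$ and $\mathbb Q$ versions of the theorem go through verbatim, since $\iota$ and $MU_n$ are defined without any use of natural-number positivity and the ring-theoretic arguments carry over unchanged. The main obstacle I foresee is the passage from $MU_n$-solvability in $M_m$ to the conclusion $n\mid m$, combining the ideal argument over $\mathbb Z$ with the Wedderburn decomposition over $\mathbb Q$; the rest is routine.
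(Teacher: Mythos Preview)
Your argument is correct, and the reduction of (1) to (2) together with the block-diagonal embedding for the direction $n\mid m \Rightarrow Dph_n \subseteq Dph_m$ match the paper exactly. The converse, however, is handled quite differently.

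The paper's witness is a \emph{single} equation in \emph{one} variable, namely $X^n - 2 = 0$. An explicit companion-type matrix shows it lies in $Dph_n(\mathbb N)$. If it also lies in $Dph_m$, then any solution $A\in M_m$ has minimal polynomial dividing $X^n-2$; since $X^n-2$ is irreducible over $\mathbb Q$ by Eisenstein, $\mu_A = X^n-2$, and since $\mu_A$ and $\chi_A$ share the same irreducible factors, $\chi_A = (X^n-2)^k$, whence $m = \deg\chi_A = kn$. This uses nothing beyond minimal/characteristic polynomials and Eisenstein's criterion.

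Your route encodes the full matrix-unit algebra as a system in $n^2$ variables and then appeals to the structure theory of $M_n(\mathbb Q)$-modules (Wedderburn). This is perfectly valid, and has the conceptual advantage of making transparent \emph{why} solvability in $M_m$ forces $n\mid m$: you are literally exhibiting a unital $M_n$-module structure on an $m$-dimensional space. On the other hand, the paper's approach is more elementary, needs only linear algebra rather than module theory, and---perhaps most strikingly---its separating witness is a single polynomial equation rather than a system of roughly $n^4$ equations. (Incidentally, your injectivity argument over $\mathbb Z$ is not needed: once $\varphi$ is unital, $\varphi_{\mathbb Q}$ is automatically nonzero, hence injective since $M_n(\mathbb Q)$ is simple.)
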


\begin{theorem}\label{theoremundecidability}
For $n \geq 1$, it is undecidable whether systems of non-commutative polynomial equations:
\begin{eqnarray*}
P_1(X_1, X_2, \dots, X_k) &=&0 \\
P_2(X_1, X_2, \dots, X_k) &=&0 \\
\dots \dots \dots \dots \dots \dots \dots &\dots& \dots \\
P_r(X_1, X_2, \dots, X_k) &=&0
\end{eqnarray*}
have solutions $(X_1, X_2, \dots, X_k) \in M_n(\mathbb N)^k$. 
\end{theorem}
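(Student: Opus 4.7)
The plan is to derive the theorem from Matiyasevich's result via \cref{diophantinetheories}. For $n=1$ it is essentially Matiyasevich's theorem, since non-commutative polynomials in $1\times 1$ matrices coincide with ordinary polynomials over $\mathbb N$ and a single equation is a special case of a system. For $n\ge 2$, the inclusion $Dph_1(\mathbb N)\subseteq Dph_n(\mathbb N)$ from \cref{diophantinetheories} supplies the reduction, provided its proof is effective; undecidability then transfers from $Dph_1(\mathbb N)$.

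The central ingredient of the effective reduction is a positive-existential definition of the set $\mathbb N\cdot I_n\subset M_n(\mathbb N)$ of non-negative scalar matrices. I would introduce $n^2$ auxiliary matrix variables $E_{ij}$, $1\le i,j\le n$, subject to the matrix-unit relations
\[
E_{ij}E_{k\ell}=\delta_{jk}E_{i\ell}, \qquad E_{11}+E_{22}+\cdots+E_{nn}=I_n.
\]
A rigidity argument, crucially exploiting that entries lie in $\mathbb N$, shows that these relations force the $E_{ij}$ to be the standard matrix units up to a simultaneous permutation of the basis: orthogonality of the diagonal idempotents and their sum being $I_n$ force each $E_{ii}$ to be a $\{0,1\}$-diagonal matrix whose supports partition $\{1,\dots,n\}$, while the relations $E_{ij}E_{ji}=E_{ii}$ and $E_{ii}E_{ij}=E_{ij}=E_{ij}E_{jj}$ pin down the off-diagonal $E_{ij}$ and rule out any $E_{ii}$ being zero. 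With these $E_{ij}$ in hand, \emph{``$X\in\mathbb N\cdot I_n$''} is expressed by the positive-existential condition
\[
\exists\, Y\in M_n(\mathbb N):\quad X=\sum_{i=1}^n E_{i1}\,E_{11}\,Y\,E_{11}\,E_{1i},
\]
since the right-hand side evaluates to $Y_{11}\cdot I_n$ and $Y_{11}$ ranges over all of $\mathbb N$ as $Y$ varies.

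Given this scalar-matrix predicate, the reduction of a Matiyasevich equation $P(x_1,\dots,x_k)=0$ over $\mathbb Z$ is immediate: introduce matrix variables $X_1,\dots,X_k$ and, using fresh auxiliary variables, constrain each $X_s$ to lie in $\mathbb N\cdot I_n$; then adjoin the single equation $P(X_1,\dots,X_k)=0$. Because scalar matrices commute and multiply like scalars, this matrix equation evaluates to $P(x_1,\dots,x_k)\cdot I_n$, vanishing iff $P$ has a natural-number root. The main technical obstacle I expect is the rigidity lemma: one must chase through the matrix-unit relations carefully and use non-negativity to rule out degenerate solutions where some $E_{ii}$ collapses, several coincide, or the $E_{ij}$ fail to generate the full matrix algebra. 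Once rigidity is established, the rest is a routine effective translation, and the undecidability of $Dph_n(\mathbb N)$ follows.
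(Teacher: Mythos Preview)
Your opening paragraph is misleading: the bare inclusion $Dph_1(\mathbb N)\subseteq Dph_n(\mathbb N)$ from \cref{diophantinetheories}, however effectively proved, does \emph{not} transfer undecidability. An undecidable set can sit inside a decidable one; what you need is a computable map $S\mapsto S'$ with $S\in Dph_1(\mathbb N)\iff S'\in Dph_n(\mathbb N)$. Fortunately, the construction you then give in the remaining paragraphs \emph{is} exactly such a many-one reduction, so the substance of your argument is sound. Your rigidity sketch is correct: from $\sum_i E_{ii}=I_n$ over $\mathbb N$ one gets that the $E_{ii}$ are $\{0,1\}$-diagonal with supports partitioning $\{1,\dots,n\}$; if some $E_{ii}=0$ then $E_{ij}=E_{ii}E_{ij}=0$ forces $E_{jj}=E_{ji}E_{ij}=0$ for every $j$, a contradiction; hence each $E_{ii}$ has rank one, and the remaining relations force $E_{ij}=e_{\sigma(i),\sigma(j)}$ for a permutation $\sigma$ (the scalar factors $c_{ij}$ satisfy $c_{ij}c_{ji}=1$ in $\mathbb N$, so all equal $1$).

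The paper's route is different and lighter. Instead of pinning down all $n^2$ matrix units, it pins down a \emph{single} diagonal idempotent: two equations in the auxiliary variables $Y,A_1,\dots,A_{n-1}$ force $Y=E_{i,i}$ for some $i$ (using that $I_n$ has a unique decomposition into $n$ nonzero matrices over $\mathbb N$). It then does \emph{not} force the $X_s$ to be scalar; it only requires $X_sY=YX_s$, which places each $X_s$ in the set $\Sigma_i$ of matrices whose $i$-th row and column vanish off the diagonal. The point is that $\pi_{i,i}:\Sigma_i\to\mathbb N$ is a homomorphism for $(+,\cdot,0,1)$, so $f(X_1,\dots,X_k)=0$ in $M_n(\mathbb N)$ projects to $f(\pi_{i,i}(X_1),\dots,\pi_{i,i}(X_k))=0$ in $\mathbb N$. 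Your approach costs $n^2$ auxiliary variables and $O(n^4)$ relations but yields an explicit positive-existential definition of $\mathbb N\cdot I_n$, which is conceptually clean and potentially reusable. The paper's approach uses only $n$ auxiliary variables and two extra equations and sidesteps defining scalars altogether by exploiting the retraction $\pi_{i,i}\circ\alpha=\mathrm{id}_{\mathbb N}$.
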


\section{Solvability depends on matrix-dimension}\label{sect:divide}

For the following classic results, see any classical book touching finitely generated modules over principal ideal commutative rings, like Ion and Radu's Algebra  \cite{IonRadu} or any classic book of linear algebra, like Serge Lang \cite{Lang}. 

Let $K$ be a commutative field. For $f \in K[X]$ and $A \in M_n(K)$, $f = \alpha_0 + \alpha_1X + \dots + \alpha_s X^s$, we define:
$$\xi(f) = f(A) = \alpha_0I_n + \alpha_1 A + \dots + \alpha_s A^s.$$ 
For given matrix $A$, the set $Ker(\xi)$ is an ideal of the principal ideal ring $K[X]$. So there is an element $\mu_A(X) \in K[X]$ such that $Ker(\xi) = (\mu_A)$. The polynomial $\mu_A$ is called {\bf minimal polynomial} of the matrix $A$. 

The polynomial
$$\chi_A(X) := \det(X I_n - A)$$
is called the {\bf characteristic polynomial} of the matrix $A$. By the Cayley-Hamilton Theorem, $\chi_A(A) = 0$. It follows that $\mu_A(X) \mid \chi_A(X)$. Moreover, the polynomials $\mu_A(X)$ and $\chi_A(X)$ have the same irreducible factors in $K[X]$. 

We will also make use of Eisenstein's Irreducibility Criterion. If $P(X) = a_n X^n + a_{n-1} X^{n-1} + \dots + a_0 \in \mathbb Z[X]$ is a polynomial, and $p$ is a prime number such that $p \nmid a_n$, $p \mid a_i$ for $0 \leq i < n$ and $p^2 \nmid a_0$, then $P(X)$ is irreducible in $\mathbb Q[X]$. In particular, the polynomial:
$$ P(X) = X^n - 2 $$
is irreducible in $\mathbb Q[X]$, with $p = 2$. 

\begin{lemma}\label{lemmasolution}
For $n \geq 1$, the following matrix $A \in M_n(\mathbb N)$:
$$ A =
\begin{pmatrix}
0 & 1 & 0 & \dots & 0 \\
0 & 0 & 1 & \dots & 0 \\
\vdots & \vdots & \vdots & \ddots & \vdots \\
0 & 0 & 0 & \dots & 1 \\
2 & 0 & 0 & \dots & 0 
\end{pmatrix}
$$
satisfies the relation $A^n = 2I_n$, so is a solution of the equation $X^n - 2 = 0$ in $M_n(\mathbb N)$. 
\end{lemma}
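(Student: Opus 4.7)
My plan is to verify the identity $A^n = 2 I_n$ directly from the action of $A$ on the standard basis $e_1, \dots, e_n$ of column vectors. Reading off the columns of $A$, one has $A e_1 = 2 e_n$ and $A e_i = e_{i-1}$ for $2 \leq i \leq n$. Iterating $A$ therefore cycles through the basis vectors in the order $e_i \mapsto e_{i-1} \mapsto \dots \mapsto e_1$, then picks up a factor $2$ to reach $2 e_n$, and continues as $2 e_n \mapsto 2 e_{n-1} \mapsto \dots$. A quick count shows that exactly $n$ applications of $A$ return $e_i$ to $2 e_i$, so $A^n e_i = 2 e_i$ for every $i$, which is the claim $A^n = 2 I_n$ and hence also $A^n - 2 = 0$.

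A cleaner alternative uses the machinery already recalled just above the lemma: $A$ is the transpose of the companion matrix of $P(X) = X^n - 2$, so it suffices to compute $\chi_A(X)$ and invoke Cayley--Hamilton. I would expand $\det(XI_n - A)$ along the first column. The entry $X$ at position $(1,1)$ contributes $X \cdot X^{n-1}$, because the $(1,1)$-minor is upper triangular with $X$'s on the diagonal. The entry $-2$ at position $(n,1)$ contributes $-2$, because the corresponding $(n,1)$-minor is lower triangular with $-1$'s on the diagonal, giving determinant $(-1)^{n-1}$, and this combines with the cofactor sign $(-1)^{n+1}$ and the entry $-2$ to produce $-2$. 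All other entries in the first column are zero. This gives $\chi_A(X) = X^n - 2$, and the Cayley--Hamilton Theorem yields $A^n = 2 I_n$.

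There is essentially no obstacle: both arguments are short. The only place where a slip is easy is the sign bookkeeping in the cofactor expansion, but the two parity-dependent signs cancel no matter what $n$ is, so no case distinction on the parity of $n$ is needed.
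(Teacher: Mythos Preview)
Your proposal is correct and follows exactly the two routes the paper itself indicates: the paper's proof simply says ``This works either by direct computation, or by showing that the characteristic polynomial $\chi_A(X) = X^n - 2$,'' and you have carried out both of these in detail. Your sign bookkeeping in the cofactor expansion is accurate, and your basis-vector argument is a clean way to organize the direct computation.
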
 

\begin{proof}
This works either by direct computation, or by showing that the characteristic polynomial $\chi_A(X) = X^n - 2$. 
\end{proof} 

\begin{lemma}\label{lemmanosmallerdim} 
If $k < n$, then the equation $X^n - 2 = 0$ has no solution in $M_k(\mathbb N)$. 
\end{lemma}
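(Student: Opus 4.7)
The plan is to argue by contradiction using the minimal polynomial of a putative solution, combined with Eisenstein's criterion applied to $X^n - 2$ as already recalled in the preceding paragraphs of this section.

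Suppose, toward a contradiction, that $B \in M_k(\mathbb{N})$ satisfies $B^n = 2 I_k$ for some $k < n$. Since $M_k(\mathbb{N}) \subseteq M_k(\mathbb{Q})$, we may regard $B$ as a matrix over the field $\mathbb{Q}$ and appeal to the classical theory recalled above. The polynomial $f(X) = X^n - 2 \in \mathbb{Q}[X]$ annihilates $B$, so by the defining property of the minimal polynomial we have $\mu_B(X) \mid X^n - 2$ in $\mathbb{Q}[X]$.

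By Eisenstein's criterion applied with $p=2$, the polynomial $X^n - 2$ is irreducible in $\mathbb{Q}[X]$. Its only monic divisors in $\mathbb{Q}[X]$ are therefore $1$ and $X^n - 2$. Since $\mu_B$ is monic and $\mu_B(B) = 0 \neq I_k$, we cannot have $\mu_B = 1$, so we must have $\mu_B(X) = X^n - 2$, which is of degree $n$.

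On the other hand, Cayley--Hamilton gives $\mu_B(X) \mid \chi_B(X)$, and $\chi_B$ has degree $k$. Hence $n = \deg \mu_B \leq \deg \chi_B = k$, contradicting the assumption $k < n$. This proves that $X^n - 2 = 0$ has no solution in $M_k(\mathbb{N})$, or even in $M_k(\mathbb{Q})$, when $k<n$. There is no real obstacle here: the argument is a direct assembly of Eisenstein's criterion with the standard degree bound $\deg \mu_B \leq k$ coming from Cayley--Hamilton, both of which are stated earlier in the section.
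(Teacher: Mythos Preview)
Your proof is correct and follows essentially the same route as the paper's: use Eisenstein to see that $X^n-2$ is irreducible, conclude that the minimal polynomial of a putative solution must equal $X^n-2$, and then derive a contradiction from $\mu_B\mid\chi_B$ and $\deg\chi_B=k<n$. Your write-up merely makes explicit a few points (working over $\mathbb{Q}$, ruling out $\mu_B=1$) that the paper leaves implicit.
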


\begin{proof} Let $A \in M_k(\mathbb N)$ be any solution of $X^n - 2$ in this set. The minimal polynomial $\mu_A(X)$ divides $X^n - 2$, which is irreducible, so $\mu_A(X) = X^n-2$. On the other hand, $\mu_A(X)$ divides the characteristic polynomial $\chi_A(X)$. But the degree of the characteristic polynomial is $k < n$, contradiction. 
\end{proof} 

\begin{lemma}\label{lemmadivisibilnecessary}
If $n < m$ and the equation $X^n -2 = 0$ is solvable in $M_m(\mathbb N)$, then $n \mid m$. 
\end{lemma}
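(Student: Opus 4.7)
The plan is to mimic the argument of \cref{lemmanosmallerdim} but to extract more information from the relationship between the minimal and characteristic polynomials. Let $A \in M_m(\mathbb{N})$ satisfy $A^n = 2I_m$. First I would invoke Eisenstein's criterion (with $p=2$) to conclude that $X^n - 2$ is irreducible in $\mathbb{Q}[X]$, and since $\mu_A(X)$ is a monic divisor of $X^n - 2$ of positive degree, this forces $\mu_A(X) = X^n - 2$.

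Next I would use the fact recalled just before \cref{lemmasolution}: $\mu_A(X)$ and $\chi_A(X)$ have the same irreducible factors in $\mathbb{Q}[X]$. Since the only irreducible factor of $\mu_A(X) = X^n - 2$ is $X^n - 2$ itself, the characteristic polynomial must be a power of this single irreducible polynomial, i.e.
\[
\chi_A(X) = (X^n - 2)^k
\]
for some integer $k \geq 1$.

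Finally, comparing degrees yields $m = \deg \chi_A = nk$, so $n \mid m$, as desired.

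The argument is essentially a refinement of \cref{lemmanosmallerdim}, and I do not expect any real obstacle. The only point requiring care is the appeal to the classical fact that $\mu_A$ and $\chi_A$ share irreducible factors (over $\mathbb{Q}$, where both polynomials live once we embed $M_m(\mathbb{N}) \subseteq M_m(\mathbb{Q})$); this is exactly the statement cited from the references at the beginning of \cref{sect:divide}, so it can be used as a black box.
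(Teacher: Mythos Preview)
Your argument is correct and is essentially identical to the paper's own proof: deduce $\mu_A = X^n - 2$ from irreducibility, then use that $\mu_A$ and $\chi_A$ share irreducible factors to conclude $\chi_A = (X^n-2)^k$, and compare degrees. The only cosmetic difference is that the paper writes $k>1$ (using the hypothesis $n<m$) where you write $k\geq 1$.
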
 

\begin{proof}
Let $A \in M_m(\mathbb N)$ be a solution of $X^n -2 = 0$. Again the minimal polynomial $\mu_A(X)$ divides $X^n-2$ which is irreducible, so $\mu_A(X) = X^n-2$. The characteristic polynomial $\chi_A(X)$ has the same irreducible factors as the minimal polynomial $\mu_A(X)$, so $\chi_A(X) = \mu_A(X)^k$ for some $k > 1$. But the degree of $\chi_A(X)$ is $m$, so $m = kn$. 
\end{proof} 

\begin{lemma}\label{lemmadivisibilsufficient}
If $n \mid m$ then $Dph_n(\mathbb N) \subset Dph_m(\mathbb N)$. 
\end{lemma}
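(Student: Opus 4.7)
The plan is to construct a structure-preserving embedding of $M_n(\mathbb N)$ into $M_m(\mathbb N)$ and push solutions across it. Since $n \mid m$, I would write $m = qn$ with $q \geq 1$ and define the block-diagonal replication map $\phi : M_n(\mathbb N) \to M_m(\mathbb N)$ sending a matrix $A$ to the $m \times m$ matrix $\mathrm{diag}(A, A, \dots, A)$ consisting of $q$ copies of $A$ down the diagonal and zeros elsewhere. Clearly $\phi$ preserves natural-number entries.

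Next I would verify that $\phi$ is a unital ring homomorphism. Since block-diagonal matrices add and multiply blockwise, one has $\phi(A + B) = \phi(A) + \phi(B)$ and $\phi(AB) = \phi(A)\phi(B)$; moreover $\phi(0_n) = 0_m$ and, crucially, $\phi(I_n) = I_m$. Thus every integer constant $\ell$ appearing in a polynomial, interpreted as $\ell I_n$ in $M_n(\mathbb N)$, is mapped by $\phi$ to its intended interpretation $\ell I_m$ in $M_m(\mathbb N)$. A straightforward induction on the syntactic structure of a non-commutative polynomial $P \in \mathbb Z[X_1, \dots, X_k]$ then yields
$$ \phi\bigl( P(A_1, \dots, A_k) \bigr) = P(\phi(A_1), \dots, \phi(A_k)) $$
for every tuple $A_1, \dots, A_k \in M_n(\mathbb N)$.

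Given any system $S \in Dph_n(\mathbb N)$ with solution $(A_1, \dots, A_k)$, the tuple $(\phi(A_1), \dots, \phi(A_k))$ is a solution of $S$ in $M_m(\mathbb N)$, so $S \in Dph_m(\mathbb N)$, as required. The argument is essentially a routine verification and I do not anticipate a genuine obstacle, but one subtlety is worth flagging: one must use the replication $A \mapsto \mathrm{diag}(A, \dots, A)$ rather than the naive padding $A \mapsto \mathrm{diag}(A, 0, \dots, 0)$, because the latter sends $I_n$ to a non-identity idempotent and therefore fails to respect variable-free terms such as the $-2$ in the equation $X^n - 2 = 0$ from \cref{lemmasolution}.
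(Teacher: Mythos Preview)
Your proof is correct and follows essentially the same approach as the paper: both construct the block-diagonal replication map $M_n(\mathbb N)\to M_m(\mathbb N)$ sending $A$ to $\mathrm{diag}(A,\dots,A)$, observe that it is a homomorphism for the language $(+,\cdot,0,1)$, and transfer solutions of any system along it. Your write-up is slightly more detailed (you make explicit the induction on polynomial structure and the reason padding by zeros would fail), but the underlying idea is identical.
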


\begin{proof} Let $m = kn$ with $k> 1$. 
Consider the following application $\delta : M_n(\mathbb N) \rightarrow M_m(\mathbb N)$ given by the following $k \times k$ block matrix-representation: 
$$ \delta(A) = 
\begin{pmatrix}
A & O & \dots & O \\
O & A & \dots & O \\
\vdots & \vdots & \ddots & \vdots \\
O & O & \dots & A
\end{pmatrix}
$$
where $O \in M_n(\mathbb N)$ is the zero matrix. We observe that $\delta$ is an injective homomorphism according to the language $(+, \cdot, 0, 1)$. 

If $(x_1, \dots, x_k) \in M_n(\mathbb N)^k$ is a solution of the system of equations:
$$P_1(x_1, x_2, \dots, x_k) = 0 \wedge \dots \wedge P_s(x_1, \dots, x_k) = 0, $$
then $(\delta(x_1), \delta(x_2), \dots, \delta(x_n)) \in M_m(\mathbb N)^k$ is a solution of the same system of equations in $M_m(\mathbb N)$. 
\end{proof} 

Now we are ready to prove \cref{diophantinetheories}. For (1) if $n = m$ then $Dph_n (\mathbb N) = Dph_m(\mathbb N)$. Also, if $n \neq m$, say $n < m$, then the equation $X^m - 2 = 0$ belongs to $Dph_m(\mathbb N)$ by \cref{lemmasolution} but does not belong to $Dph_n (\mathbb N)$ by \cref{lemmanosmallerdim}, so these sets are different. For (2), suppose that $Dph_n(\mathbb N) \subseteq Dph_m(\mathbb N)$. Again by \cref{lemmasolution}, the equation $X^n -2 = 0$ belongs to $Dph_n(\mathbb N)$. As it belongs also to $Dph_m(\mathbb N)$, by the \cref{lemmadivisibilnecessary} one gets that $n \mid m$. Also, if $n \mid m$, by \cref{lemmadivisibilsufficient} we get $Dph_n(\mathbb N) \subseteq Dph_m(\mathbb N)$.

For the  families  $\left (Dph_n(\mathbb Z)\right )$, respectively $\left (Dph_n( \mathbb Q) \right )$, the proof works similarly.  \qed

\section{Solvability is undecidable}\label{sect:undec}

 For $1 \leq i, j \leq n$, the {\bf elementary matrix} $E_{i,j}$ is defined by $E_{i,j}(a,b) = 1$ iff $i = a$ and $j = b$, respectively $E_{i,j}(a,b) = 0$ otherwise. The matrices $E_{ii}$ are called {\bf elementary diagonal} matrices. 

\begin{lemma}\label{lemmadefinitionelementarydiagonal}
Let $(Y, A_1, A_2, \dots, A_{n-1})$ be any solution of the system:
   \begin{eqnarray*}
       Y + A_1YA_1 + A_2YA_2 + \dots + A_{n-1} Y A_{n-1} & = & 1 \\
       A_1^2 A_2^2 \dots A_{n-1}^2 & = & 1 
   \end{eqnarray*}
in $M_n(\mathbb N)$. Then there is some $1 \leq i \leq n$ such that $Y = E_{i,i}$. Moreover, for every choice $Y = E_{i,i}$ there is a solution $(Y, A_1, \dots, A_{n-1})$ satisfying this choice.
\end{lemma}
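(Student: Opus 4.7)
The plan is to use the second equation to force each $A_k$ to be a permutation matrix, and then use the first equation together with a simple entry-summing argument to pin down $Y$. The converse direction will be an explicit construction via transpositions.

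For the first step, I would invoke the auxiliary fact that any matrix in $M_n(\mathbb N)$ possessing a right inverse also in $M_n(\mathbb N)$ must be a permutation matrix: from $AB = I$ with nonnegative integer entries, each diagonal identity $\sum_k A_{ik}B_{ki} = 1$ forces exactly one pair $k = \sigma(i)$ with $A_{i,\sigma(i)} = B_{\sigma(i),i} = 1$, and the off-diagonal equations $\sum_k A_{ik}B_{kj} = 0$ then force the remaining entries of $A$ to vanish and $\sigma$ to be a permutation. Applying this to $A_1 \cdot (A_1 A_2^2 \cdots A_{n-1}^2) = I$, we deduce that $A_1$ is a permutation matrix; since then $A_1^{-2} \in M_n(\mathbb N)$, left-multiplying by $A_1^{-2}$ and right-multiplying by $A_1^2$ rewrites the second equation as $A_2 \cdot (A_2 A_3^2 \cdots A_{n-1}^2 A_1^2) = I$, and induction on $k$ shows every $A_k$ is a permutation matrix, say corresponding to $\pi_k \in S_n$.

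With each $A_k = P_{\pi_k}$, a direct index computation gives $(A_k Y A_k)(i,j) = Y(\pi_k(i),\pi_k^{-1}(j))$. The off-diagonal entries $(i,j)$ with $i \neq j$ of the first equation then express $0$ as a sum of nonnegative integers, so in particular $Y(i,j) = 0$; hence $Y$ is diagonal. Next, summing all $n^2$ entries of the first equation and observing that $(i,j) \mapsto (\pi_k(i),\pi_k^{-1}(j))$ is a bijection on index pairs, we obtain
$$n \cdot \sum_{i,j} Y(i,j) \;=\; \sum_{i,j} I_n(i,j) \;=\; n,$$
so the diagonal of $Y$ consists of nonnegative integers summing to $1$; combined with diagonality this forces $Y = E_{i,i}$ for exactly one $i$.

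For the converse, given $i$, enumerate $\{1,\dots,n\}\setminus\{i\} = \{j_1, \dots, j_{n-1}\}$ and take $A_k$ to be the permutation matrix of the transposition $(i\,j_k)$. Each $A_k^2 = I_n$, so the second equation holds trivially, and a direct computation gives $A_k E_{i,i} A_k = E_{j_k,j_k}$, whence $E_{i,i} + \sum_{k=1}^{n-1} E_{j_k,j_k} = \sum_{j=1}^n E_{j,j} = I_n$. The main obstacle I anticipate is the bookkeeping in the first step — peeling off the $A_k$'s one at a time from the product while keeping everything inside $M_n(\mathbb N)$ — but once the underlying lemma that a nonnegative right-invertible matrix is a permutation is established, the remainder of the argument is essentially formal.
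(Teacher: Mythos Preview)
Your argument is correct, but it takes a longer route than the paper's. You work hard to show that each $A_k$ is a permutation matrix and then run an entry-sum argument; the paper never proves this. Instead, it observes only that the second equation makes every $A_k$ invertible (over $\mathbb Q$), so no summand $A_kYA_k$ can vanish without forcing $Y=0$. This already gives $n$ nonzero matrices in $M_n(\mathbb N)$ summing to $I_n$, and a one-line pigeonhole (the entry sum of $I_n$ is $n$, so each summand has entry sum exactly $1$) forces the set $\{Y, A_1YA_1,\dots,A_{n-1}YA_{n-1}\}$ to equal $\{E_{1,1},\dots,E_{n,n}\}$. The existence direction via transposition matrices is identical to yours. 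Your approach yields the bonus information that the $A_k$ are necessarily permutation matrices, which is true and mildly interesting but not needed for the lemma; the paper's approach trades that information for a shorter proof that avoids the auxiliary ``nonnegative right-invertible $\Rightarrow$ permutation'' lemma and the inductive peeling step entirely. Incidentally, your Step~3 (diagonality of $Y$) already follows from nonnegativity of $A_kYA_k$ alone, without knowing the $A_k$ are permutations --- so the permutation-matrix machinery is really only being used for the entry-sum identity in Step~4.
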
 

\begin{proof}
For $n=1$ there is no $A_i$ and the system has the solution $Y = 1 = E_{1,1}$. Take $n \geq 2$. We observe that all matrices $A_i$ are invertible. If any product $A_k YA_k = 0$, then $Y = 0$ and the whole left-hand side of the first equation would be $0$. So all the matrices $Y$, $A_1YA_1$, $\dots$, $A_{n-1}YA_{n-1}$ are different from $0$. On the other hand, $M_n(\mathbb N)$ is closed for multiplication, so all these $n$ many matrices consist of natural numbers only. As $1$ is interpreted as the unit matrix $I_n$, the only one way one can decompose $I_n$ in $M_n(\mathbb N)$ as a sum of $n$ many non-zero matrices is:
$$I_n = E_{1,1} + E_{2,2} + \dots + E_{n,n}.$$
It follows that the following sets are identic:
$$\{Y, A_1YA_1, \dots, A_{n-1}YA_{n-1}\} = \{E_{1,1}, E_{2,2}, \dots, E_{n,n}\}.$$
This shows that in any solution, there is some $1 \leq i \leq n$ such that $Y = E_{i,i}$. Now we have to show that a solution always exists. Well, for every possible choice $Y = E_{i,i}$ many solutions do exist. For example, if we want that $A_1YA_1 = E_{j,j}$ for some $1 \leq j \leq n$ with $j \neq i$, we choose $A_1$ to be the permutation matrix corresponding to the transposition $(i,j)$. The action of this matrix on the canonical orthogonal basis $e_1 = (1, 0, \dots, 0)$, $\dots$, $e_n = (0, 0, \dots, 1)$ is the following:  $A_1 e_i = e_j$, $A_1e_j = e_i$ and $A_1 e_k = e_k$ for $k \notin \{i,j\}$. Indeed, we see that:
$$A_1 E_{i,i} A_1 = E_{j,j},$$
and also $A_1^2 = 1$. So all $n-1$ many matrices $A_1$, $\dots$, $A_{n-1}$ are chosen to be the different permutation matrices corresponding to the transpositions $(i,j)$ with $j \neq i$, taken in some arbitrary order. All these matrices satisfy $A_k^2 = 1$ for $1 \leq k \leq n-1$, so the condition $A_1^2A_2^2 \dots A_{n-1}^2 = 1$ is fulfilled. 
\end{proof} 

For example, if $n = 2$, a possible solution is:
$$Y= \begin{pmatrix} 0 & 0 \\ 0 & 1 \end{pmatrix}= E_{2,2},\,\,\,\, A_1 = \begin{pmatrix} 0 & 1 \\ 1 & 0 \end{pmatrix} = (1,2).$$

For some fixed $1 \leq i \leq n$, consider the set:
$$\Sigma_i = \{ A \in M_n(\mathbb N) \,|\, E_{i,i} A = A E_{i,i} \}.$$ 
We call $\pi_{i,i}: M_n(\mathbb N) \rightarrow \mathbb N$ the application given by $\pi_{i,i}(A) = A(i,i)$. 

\begin{lemma}\label{lemmasigmai}
The set $\Sigma_i$ can be characterized as follows:
$$\Sigma_i = \{ A \in M_n(\mathbb N) \,|\,1 \leq k \leq n \,\wedge \, k \neq i \rightarrow A(k,i) = A(i,k) = 0 \}.$$ 
So $A(i,i)$ is the only one element on row $i$ and on column $i$, which is not necessarily $0$. 
\end{lemma}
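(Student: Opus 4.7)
The plan is to prove the claimed set equality by directly computing the two matrix products $E_{i,i}A$ and $AE_{i,i}$ entry-wise and then comparing them position by position. Since $E_{i,i}(a,c) = 1$ exactly when $a = c = i$ (and $0$ otherwise), the general formula $(E_{i,i}A)(a,b) = \sum_c E_{i,i}(a,c)A(c,b)$ collapses to $(E_{i,i}A)(a,b) = A(i,b)$ when $a = i$ and $0$ otherwise. Symmetrically, $(AE_{i,i})(a,b) = A(a,i)$ when $b = i$ and $0$ otherwise.

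Having these explicit formulas, I would then split the comparison $E_{i,i}A = AE_{i,i}$ into four cases according to whether $a$ and $b$ equal $i$ or not. The cases $a = b = i$ and $a \neq i \wedge b \neq i$ impose no constraint (both sides yield $A(i,i)$ in the first case and $0$ in the second). The case $a = i$, $b \neq i$ forces $A(i,b) = 0$, and the case $a \neq i$, $b = i$ forces $A(a,i) = 0$. Conversely, if every $A(i,k)$ and $A(k,i)$ for $k \neq i$ vanishes, then all four cases of the commutation equation are immediately satisfied.

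This gives the two inclusions at once. The remark that $A(i,i)$ is the unique entry on row $i$ and column $i$ that need not vanish is then just a restatement of the obtained characterization. There is no real obstacle here; the only thing to be careful about is bookkeeping the four cases cleanly so that both directions of the equivalence are genuinely covered, and noting that working in $M_n(\mathbb N)$ rather than $M_n(\mathbb Z)$ plays no role since the argument is purely about products and equalities of nonnegative integer entries.
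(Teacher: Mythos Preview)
Your proposal is correct and follows essentially the same approach as the paper: both compute $E_{i,i}A$ and $AE_{i,i}$ explicitly (the paper displays the resulting matrices, while you give the entrywise formulas and a four-case split) and then read off the vanishing conditions on row and column $i$. Your version is slightly more explicit about the case analysis and the converse direction, but the underlying argument is identical.
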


\begin{proof} If the matrix $A$ consists of the elements $A(k,j)$, one has:
$$ A E_{i,i} = 
\begin{pmatrix}
A(1,1) & A(1,2) & \dots & A(1,n) \\
\vdots & \vdots & \vdots & \vdots \\
A(i,1) & A(i,2) & \dots & A(i,n) \\
\vdots & \vdots & \vdots & \vdots \\
A(n,1) & A(n,2) & \dots & A(n,n) 
\end{pmatrix}
\begin{pmatrix}
0 & \dots & 0 & \dots & 0\\
\vdots & \vdots & \vdots & \vdots & \vdots \\
0 & \dots &1 & \dots & 0\\
\vdots & \vdots & \vdots & \vdots & \vdots \\
0 & \dots & 0 & \dots & 0
\end{pmatrix} = 
\begin{pmatrix}
0 & \dots & A(1,i) & \dots & 0\\
\vdots & \vdots & \vdots & \vdots & \vdots\\
0 & \dots &A(i,i) & \dots & 0\\
\vdots & \vdots & \vdots & \vdots & \vdots \\
0 & \dots & A(n,i) & \dots & 0
\end{pmatrix}
$$

$$ E_{i,i} A = 
\begin{pmatrix}
0 & \dots & 0 & \dots & 0\\
\vdots & \vdots & \vdots & \vdots & \vdots \\
0 & \dots &1 & \dots & 0\\
\vdots & \vdots & \vdots & \vdots & \vdots \\
0 & \dots & 0 & \dots & 0
\end{pmatrix}
\begin{pmatrix}
A(1,1) & A(1,2) & \dots & A(1,n) \\
\vdots & \vdots & \vdots & \vdots \\
A(i,1) & A(i,2) & \dots & A(i,n) \\
\vdots & \vdots & \vdots & \vdots \\
A(n,1) & A(n,2) & \dots & A(n,n) 
\end{pmatrix}
 = 
$$
$$ =
\begin{pmatrix}
0 & \dots & 0 & \dots & 0\\
\vdots & \vdots & \vdots & \vdots & \vdots\\
A(i,1) & \dots &A(i,i) & \dots & A(i,n)\\
\vdots & \vdots & \vdots & \vdots & \vdots \\
0 & \dots & 0 & \dots & 0
\end{pmatrix}
$$ 
From the equality $AE_{i.i} = E_{i,i} A$, the conclusion follows.
\end{proof}
The typical element of $\Sigma_i$ has the following shape:
$$
\begin{pmatrix}
A(1,1) & A(1,2) & \dots &A(1, i-1) & 0 & A(1, i+1) & \dots & A(1,n) \\
\vdots & \vdots & \vdots & \vdots & \vdots & \vdots &\vdots & \vdots \\
A(i-1,1) & A(i-1,2) & \dots &A(i-1, i-1) & 0 & A(i-1, i+1) & \dots & A(i-1,n) \\
&&&&&&& \\
0 & 0 & \dots &0 & A(i,i) & 0 & \dots & 0  \\
&&&&&&& \\
A(i+1,1) & A(i+1,2) & \dots &A(i+1, i-1) & 0 & A(i+1, i+1) & \dots & A(i+1,n) \\
\vdots & \vdots & \vdots & \vdots & \vdots & \vdots &\vdots & \vdots  \\
A(n,1) & A(n,2) & \dots & A(n,i-1) & 0  & A(n, i+1) & \dots & A(n,n)
\end{pmatrix}
$$
Here, the values denoted by $A(j,k)$ with various indexes, might be also equal $0$. But only the values marked by $0$ must be equal $0$ in every element of $\Sigma_i$. 

\begin{lemma}\label{lemmahomomorphism} For all $1 \leq i \leq n$, one has:
\begin{enumerate}
\item $(\Sigma_i, +, \cdot, 0, I_n)$ is a substructure of $(M_n(\mathbb N), +, \cdot, 0, I_n)$.  
\item $\pi_{i,i} : (\Sigma_i, +, \cdot, 0, I_n) \rightarrow (\mathbb N, +, \cdot, 0, 1)$ is a surjective homomorphism. 
\item $\alpha :  (\mathbb N, +, \cdot, 0, 1) \rightarrow (\Sigma_i, +, \cdot, 0, I_n) $ given by $\alpha(a) = aI_n$ is an injective homomorphism.
\item $\pi_{i,i} \circ \alpha = id_{\mathbb N}$.
\end{enumerate} 
\end{lemma}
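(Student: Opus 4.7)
The plan is to verify the four parts in order; almost every step is routine, with the only substantive calculation being the multiplicativity in~(2), which is where \cref{lemmasigmai} does the real work.

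For (1), I would observe that $\Sigma_i$ is just the centralizer of $E_{i,i}$ in $M_n(\mathbb N)$. Closure under addition is immediate from bilinearity: $(A+B)E_{i,i} = AE_{i,i}+BE_{i,i} = E_{i,i}A + E_{i,i}B = E_{i,i}(A+B)$. Closure under multiplication is the standard centralizer argument: $(AB)E_{i,i} = A(BE_{i,i}) = A(E_{i,i}B) = (AE_{i,i})B = (E_{i,i}A)B = E_{i,i}(AB)$. Both $0$ and $I_n$ commute with every matrix, hence belong to $\Sigma_i$.

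For (3), the scalar matrix $aI_n$ commutes with every element of $M_n(\mathbb N)$, so in particular $\alpha(a)\in\Sigma_i$. Additivity, multiplicativity and preservation of the constants are the identities $(a+b)I_n = aI_n + bI_n$, $(ab)I_n = (aI_n)(bI_n)$, $0\cdot I_n = 0$, $1\cdot I_n = I_n$. Injectivity follows by reading off any diagonal entry. Part~(4) is then just the computation $\pi_{i,i}(\alpha(a)) = (aI_n)(i,i) = a$, which also proves the surjectivity needed in~(2).

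The one step that uses the content of the previous lemma is the multiplicativity of $\pi_{i,i}$ on $\Sigma_i$. Writing out the matrix product, $(AB)(i,i) = \sum_{k=1}^n A(i,k)\,B(k,i)$. By \cref{lemmasigmai}, if $A\in\Sigma_i$ then $A(i,k)=0$ for $k\neq i$, and if $B\in\Sigma_i$ then $B(k,i)=0$ for $k\neq i$. Each summand with $k\neq i$ therefore vanishes (for either reason), leaving only $(AB)(i,i) = A(i,i)\,B(i,i)$, which is exactly $\pi_{i,i}(A)\pi_{i,i}(B)$. Additivity is trivial from entrywise addition, and $\pi_{i,i}(0) = 0$, $\pi_{i,i}(I_n) = 1$ are direct.

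There is no real obstacle here; the only point worth stressing is that without the off-diagonal vanishing given by \cref{lemmasigmai}, the map $\pi_{i,i}$ would fail to be multiplicative on all of $M_n(\mathbb N)$. Restricting to $\Sigma_i$ is precisely what kills the cross terms in the expansion of $(AB)(i,i)$, and this is the reason for introducing $\Sigma_i$ in the first place.
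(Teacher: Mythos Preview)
Your proof is correct and is essentially a fully written-out version of the paper's one-line proof, which simply reads ``By direct computation, using the characterisation of $\Sigma_i$ given in \cref{lemmasigmai}.'' You carry out exactly this computation, and your emphasis on the multiplicativity of $\pi_{i,i}$ as the one place where \cref{lemmasigmai} is genuinely needed is apt.
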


\begin{proof} By direct computation, using the characterisation of $\Sigma_i$ given in \cref{lemmasigmai}. \end{proof} 

\begin{lemma}\label{lemmaembedding}
For $k \geq 1$ and $f \in \mathbb Z[x_1, \dots, x_k]$, the equation $f(x_1, x_2, \dots, x_k) = 0$ has a solution  $(x_1, x_2, \dots, x_k) \in \mathbb N^k$ if and only if the following system of $k+3$ equations in $n+k$ unknowns: 

  \begin{eqnarray*}
       Y + A_1YA_1 + A_2YA_2 + \dots + A_{n-1} Y A_{n-1} & = & 1 \\
       A_1^2 A_2^2 \dots A_{n-1}^2 & = & 1 \\
       X_1 Y - Y X_1 &=& 0\\
       X_2 Y - Y X_2 &=& 0\\
      \dots \dots  \dots \dots \dots \dots \dots & \dots & \dots \\
       X_k Y -  Y X_k &=& 0\\
      f(X_1, X_2, \dots, X_k) &= & 0
   \end{eqnarray*} 

has a solution $(Y, A_1, A_2, \dots, A_{n-1}, X_1, X_2, \dots, X_k) \in M_n(\mathbb N)^{n+k}$. 
\end{lemma}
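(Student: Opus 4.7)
The plan is to prove both directions of the biconditional using the structural lemmas already in place.

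\textbf{From $\mathbb{N}$-solutions to matrix solutions.} Assume $(x_1,\dots,x_k)\in\mathbb{N}^k$ with $f(x_1,\dots,x_k)=0$. I would fix any $1\le i\le n$ and invoke the existence half of \cref{lemmadefinitionelementarydiagonal} to produce $Y=E_{i,i}$ together with permutation matrices $A_1,\dots,A_{n-1}$ satisfying the first two equations. For the remaining unknowns I would set $X_j := x_j I_n$. Scalar matrices commute with every matrix, so each commutator equation $X_j Y - Y X_j = 0$ is automatic, and $f(x_1 I_n,\dots,x_k I_n)=f(x_1,\dots,x_k)\,I_n=0$, which handles the last equation.

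\textbf{From matrix solutions to $\mathbb{N}$-solutions.} Conversely, given a solution of the full system, the first two equations together with \cref{lemmadefinitionelementarydiagonal} force $Y=E_{i,i}$ for some $1\le i\le n$. The commutator equations $X_j Y = Y X_j$ are precisely the defining condition for $X_j\in\Sigma_i$. I would then apply $\pi_{i,i}$ to both sides of $f(X_1,\dots,X_k)=0$. By \cref{lemmahomomorphism}, $\Sigma_i$ is a substructure of $M_n(\mathbb{N})$ and $\pi_{i,i}:\Sigma_i\to\mathbb{N}$ is a surjective homomorphism of $(+,\cdot,0,1)$-structures, so the polynomial expression $f(X_1,\dots,X_k)$ is evaluable entirely inside $\Sigma_i$ and
\[ 0 \;=\; \pi_{i,i}\bigl(f(X_1,\dots,X_k)\bigr) \;=\; f\bigl(\pi_{i,i}(X_1),\dots,\pi_{i,i}(X_k)\bigr), \]
yielding the desired solution in $\mathbb{N}^k$.

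\textbf{Where the care is.} The essential point is that the commutator equations trap each $X_j$ inside the substructure $\Sigma_i$, on which a genuine ring homomorphism to $\mathbb{N}$ is available. A bare matrix identity $f(X_1,\dots,X_k)=0$ in $M_n(\mathbb{N})$ would not in general descend to a scalar identity, since entrywise projection is not a ring map. Integer coefficients in $f$ pose no obstacle: one rearranges $f=0$ into an equivalent equality $f^+ = f^-$ with natural-number coefficients on both sides before applying $\pi_{i,i}$. Once \cref{lemmadefinitionelementarydiagonal} and \cref{lemmahomomorphism} are in hand, what remains is routine bookkeeping.
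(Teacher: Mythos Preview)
Your argument is correct and follows the paper's proof essentially verbatim: both directions invoke \cref{lemmadefinitionelementarydiagonal} to pin down $Y=E_{i,i}$, use scalar matrices $x_jI_n$ for the forward implication, and in the converse use the commutator equations to land in $\Sigma_i$ before projecting via $\pi_{i,i}$ with \cref{lemmahomomorphism}. Your closing remark about rewriting $f=0$ as $f^+=f^-$ to stay inside the $(+,\cdot,0,1)$-language is a point the paper leaves implicit here (it is spelled out later in \cref{sect:other}), so your version is in fact slightly more careful.
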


\begin{proof}
Let $(x_1, x_2, \dots, x_k) \in \mathbb N^k$ be a solution of the equation $f(x_1, x_2, \dots, x_k) = 0$. Then the matrices $X_j = x_j I_n$ build a solution of $f(X_1, X_2, \dots, X_k) = 0$ in $M_n(\mathbb N)$. We know that the system built up by the first two equations is always solvable, by \cref{lemmadefinitionelementarydiagonal}. For every solution $(Y, A_1, \dots, A_{n-1})$ of the system consisting of the first two equations, one has $X_j Y = Y X_j$ because the diagonal matrices (also called scalar matrices) commute with every matrix. 

Now let $(Y, A_1, A_2, \dots, A_{n-1}, X_1, X_2, \dots, X_k) \in M_n(\mathbb N)^{n+k}$ be a solution of the system. Again by  \cref{lemmadefinitionelementarydiagonal}, there is an $1 \leq i \leq n$ such that $Y = E_{i,i}$. Because of the equations $X_j Y = YX_j$ for all $1 \leq j \leq k$, we find that $X_1, X_2, \dots X_k \in \Sigma_i$. Using \cref{lemmahomomorphism}, we observe that the tuple $(\pi_{i,i}(X_1), \pi_{i,i}(X_2), \dots, \pi_{i,i}(X_k)) \in \mathbb N^k$ is a solution for the equation $f(x_1, x_2, \dots, x_k) = 0$. 
\end{proof}

Now we can prove \cref{theoremundecidability}. For $n = 1$, we get the Theorem of Matiyasevich, see \cite{Matiyasevich}. For $n > 1$, we apply the Theorem of Matiyasevich and \cref{lemmaembedding}. \qed

\section{Other results}\label{sect:other}

The first category of results are corollaries to \cref{theoremundecidability}. A subset $B \subset \mathbb N$ is called {\bf additive basis} of the set of real numbers if there is an $d = d_B$ such that for every $x \in \mathbb N$ there are $x_1, x_2, \dots, x_d \in B$ such that 
$$x = x_1 + x_2 + \dots + x_d.$$

\begin{corollary}
Let $B \subset \mathbb N$ be an additive basis for the set of natural numbers, such that $0, 1 \in B$. For every $n \geq 1$, it is undecidable whether systems of non-commutative polynomials have solutions in the set $M_n(B)$. 
\end{corollary}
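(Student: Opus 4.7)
The plan is to give a straightforward many-one reduction from the undecidable problem in \cref{theoremundecidability} to the solvability problem in $M_n(B)$. The essential feature of an additive basis is the existence of a fixed constant $d = d_B$ such that every natural number is a sum of exactly $d$ elements of $B$ (padding with the element $0 \in B$ if fewer summands suffice). Applied entrywise, this immediately implies that every matrix $A \in M_n(\mathbb N)$ admits a decomposition $A = A^{(1)} + A^{(2)} + \dots + A^{(d)}$ with each $A^{(l)} \in M_n(B)$.

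Given an input system $S$ of equations $P_1(X_1, \dots, X_k) = 0, \dots, P_r(X_1, \dots, X_k) = 0$, I would construct a new system $S'$ in the $kd$ unknowns $X_j^{(l)}$ (for $1 \leq j \leq k$ and $1 \leq l \leq d$) by formally substituting $X_j \mapsto X_j^{(1)} + \dots + X_j^{(d)}$ in every polynomial and expanding the result in the free non-commutative $\mathbb Z$-algebra. Because $0, 1 \in B$, the structural constants still have the same interpretation when unknowns range over $M_n(B)$. One then checks the equivalence: $S$ is solvable in $M_n(\mathbb N)^k$ if and only if $S'$ is solvable in $M_n(B)^{kd}$. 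The forward direction is immediate from the entrywise decomposition above. For the backward direction, using that $M_n(B) \subseteq M_n(\mathbb N)$ and that $M_n(\mathbb N)$ is closed under addition, I set $X_j := X_j^{(1)} + \dots + X_j^{(d)} \in M_n(\mathbb N)$; by construction each $P_i$ evaluates to the same matrix as when substituting the original sums, namely the zero matrix.

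Since the transformation $S \mapsto S'$ is effectively computable (for fixed $n$ and $B$, the integer $d$ is just a constant of the problem), a decision procedure for solvability in $M_n(B)$ would transfer to one for solvability in $M_n(\mathbb N)$, contradicting \cref{theoremundecidability}. I do not foresee a serious obstacle; the only mildly delicate point is that after substitution and distribution the expanded polynomial can have many non-commutative monomials and coefficients of either sign, but neither matters, since the requirement is only that the polynomial evaluate to the zero matrix, not that it be in any sense positive.
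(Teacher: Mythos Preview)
Your proposal is correct and follows essentially the same route as the paper: replace each unknown $X_j$ by a sum of $d$ fresh unknowns, use the additive-basis property entrywise to get the forward direction, and use $M_n(B)\subseteq M_n(\mathbb N)$ together with closure under addition for the backward direction, thereby reducing from \cref{theoremundecidability}. Your write-up is in fact slightly more explicit than the paper's in justifying both directions of the equivalence; the only cosmetic difference is that the paper ties the hypothesis $0,1\in B$ to the specific shape of the unknown $Y$ in the encoding of \cref{lemmaembedding}, whereas you phrase it in terms of the interpretation of the constants.
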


\begin{proof}
It follows that for every matrix $C \in M_n(\mathbb N)$ there are matrices $C_1$, $\dots$, $C_d$ $\in M_n(B)$ such that:
$$ C = C_1 + C_2 + \dots + C_d.$$
One takes a system $S$ of non-commutative polynomials and built a system $S'$ by replacing every variable $X$ with a sum ${^1X} + {^2X} + \dots + {^dX}$, where $^1X$, $^2X$, $\dots$, $^dX$ are new variables, used such that their sum replaces only the variable $X$ and no other variables. $S$ is solvable in $M_n(\mathbb N)$ iff $S'$ is solvable in $M_n(B)$. It is important that $0, 1 \in B$ because the unknown $Y$ can be satisfied only with matrices containing zeros and a one.\end{proof} 

For example, for all $t \geq 2$, the set of $t$-powers is an additive basis of $\mathbb N$ (see Hilbert \cite{Hilbert} for the original proof, or Khinchin \cite{Khinchin} for a modern proof), so for every $t \geq 2$ it is undecidable whether polynomial systems of equations have solutions consisting of matrices filled with $t$-powers. The same is true for polygonal numbers (see Nathanson's paper \cite{Nathanson} or his book \cite{Nathanson1}). As the set of prime numbers completed with $0$ and $1$ also builts an additive basis for $\mathbb N$ (see Schnirelmann \cite{Schnirelmann}, Vinogradov \cite{Vinogradov} or Nathanson's book \cite{Nathanson1}), it is undecidable whether solutions consisting of matrices filled with $0$, $1$ and prime numbers do exist. Such deductions have been already done by the author in \cite{Prunescu} for the case of equations over $\mathbb N$.

If one could find a positive existential definition of $M_n(\mathbb N)$ in $M_n(\mathbb Z)$, one could transfer \cref{theoremundecidability} to the rings $M_n(\mathbb Z)$. Also, such a result would be proved if one finds a positive existential definition of the set $\{E_{1,1}, E_{2,2}, \dots, E_{n,n}\}$ in $M_n(\mathbb Z)$. We also do not know how to give diophantine definitions for the conjunction and the disjunction of equations, in order to transfer \cref{theoremundecidability} from systems of equations to equations. 

It is much easier to prove undecidability if we allow equations with parameters in $M_n(\mathbb N)$. In this case the result works directly for equations. We do not need considering systems of equations anymore. 

\begin{theorem}
For $n \geq 1$, it is undecidable, whether non-commutative equations with parameters in $M_n(\mathbb Z)$ have solutions in $M_n(\mathbb N)$. It is also undecidable, whether  non-commutative equations with parameters in $M_n(\mathbb Z)$ have solutions in $M_n(\mathbb Z)$.  Moreover, it is sufficient to consider parameters in $M_n(\{0,1\})$. 
\end{theorem}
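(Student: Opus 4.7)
The plan is to reduce Matiyasevich's theorem to the satisfiability of a single matrix equation with $\{0,1\}$-matrix parameters, handling both the $M_n(\mathbb N)$ and $M_n(\mathbb Z)$ cases uniformly via a ``sum of squares'' encoding. The key observation is that for any non-commutative polynomial $P(X_1,\ldots,X_k)$ taking values in $M_n(\mathbb Z)$, the identity $E_{1,a}\,P\,E_{b,1}=P(a,b)E_{1,1}$ extracts the individual entry $P(a,b)$ as a scalar multiple of $E_{1,1}$. Squaring and summing yields
$$
\sum_{a,b=1}^{n}\bigl(E_{1,a}\,P\,E_{b,1}\bigr)^{2}\;=\;\Bigl(\sum_{a,b=1}^{n}P(a,b)^{2}\Bigr)E_{1,1},
$$
which equals the zero matrix iff each $P(a,b)=0$, i.e.\ iff $P=0$ as a matrix, because a sum of squares of integers vanishes only when every summand does. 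This trick converts any finite system of matrix equations into one single matrix equation, and applies identically over $M_n(\mathbb N)$ and $M_n(\mathbb Z)$.

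Given a Matiyasevich polynomial $f\in\mathbb Z[x_1,\ldots,x_k]$, I would fix $Y=E_{1,1}$ as a parameter (removing the auxiliary variables $Y,A_1,\ldots,A_{n-1}$ that appeared in \cref{lemmaembedding}), set
$$
P_j(X_1,\ldots,X_k)\;:=\;X_j\,E_{1,1}-E_{1,1}\,X_j\quad(1\le j\le k),\qquad P_{k+1}\;:=\;f(X_1,\ldots,X_k),
$$
and propose as the single equation
$$
Q(X_1,\ldots,X_k)\;:=\;\sum_{j=1}^{k+1}\sum_{a,b=1}^{n}\bigl(E_{1,a}\,P_j\,E_{b,1}\bigr)^{2}\;=\;0.
$$
The only parameters appearing are the elementary matrices $E_{1,a}$, $E_{b,1}$, $E_{1,1}$, all of which lie in $M_n(\{0,1\})$.

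For both solution rings the equivalence is immediate from the identity above. Forward: if $f(a)=0$ for some $a\in\mathbb N^{k}$ (resp.\ $\mathbb Z^{k}$), then the scalar matrices $X_j=a_jI_n$ make every $P_j=0$, hence $Q=0$. Backward: $Q=0$ forces every $P_j=0$ by the sum-of-squares identity. The first $k$ vanishings mean $X_j\in\Sigma_1$ for all $j$ (\cref{lemmasigmai}); the last one means $f(X_1,\ldots,X_k)=0$ in $M_n$. By the ring homomorphism $\pi_{1,1}$ of \cref{lemmahomomorphism}, the $(1,1)$-entry of $f(X_1,\ldots,X_k)$ equals $f(X_1(1,1),\ldots,X_k(1,1))$, so the tuple $(X_1(1,1),\ldots,X_k(1,1))$ is an $\mathbb N$-solution (resp.\ $\mathbb Z$-solution) of $f$. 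Undecidability then follows from Matiyasevich's theorem in each case. The construction degenerates gracefully at $n=1$, where $E_{1,1}=1$ makes every $P_j$ identically zero and $Q$ reduces to $f^{2}=0$, i.e.\ to Matiyasevich directly.

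The only subtle point that needs attention is the backward use of ``sum of squares of integers is zero iff each summand is zero'': the entries $P_j(a,b)$ are integers (possibly negative, since $P_j$ involves a subtraction, even when the $X_j$ come from $M_n(\mathbb N)$), but the scalar sum at the $(1,1)$-slot of $Q$ is nevertheless a non-negative integer that vanishes only when every integer $P_j(a,b)$ vanishes. No further tricks (such as routing individual constraints to disjoint matrix entries, invoking Lagrange's four-square theorem, or enlarging $n$) are required, and every parameter used remains within $M_n(\{0,1\})$, which yields the final clause of the theorem.
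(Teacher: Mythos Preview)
Your argument is correct, but it follows a genuinely different route from the paper. The paper's proof is more direct and uses only the single parameter $E_{1,1}$: given $f$, it builds $\tilde f$ by inserting $E=E_{1,1}$ between consecutive variables in every monomial and replacing the free term $c$ by $cE$. Because $E_{1,1}AE_{1,1}=A(1,1)E_{1,1}$ and $E_{1,1}^2=E_{1,1}$, one gets $\tilde f(X_1,\dots,X_k)=f\bigl(X_1(1,1),\dots,X_k(1,1)\bigr)E_{1,1}$ outright, so no commutator conditions and no combination of several equations are needed. Your approach instead recycles the $\Sigma_1$/commutator machinery from the parameter-free setting, now feeding in $E_{1,1}$ as a parameter rather than defining an $E_{i,i}$ existentially, and then collapses the resulting system into a single equation by the sum-of-squares identity $\sum_{a,b}(E_{1,a}PE_{b,1})^2=\bigl(\sum_{a,b}P(a,b)^2\bigr)E_{1,1}$. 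Both proofs are valid; the paper's buys minimality (one $\{0,1\}$-parameter, no auxiliary trick), while yours is more modular and yields, as a side benefit, a general recipe for turning any finite system of matrix equations over $M_n(\mathbb Z)$ or $M_n(\mathbb N)$ into a single equation at the cost of introducing the $2n-1$ elementary-matrix parameters $E_{1,a}$, $E_{b,1}$.
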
 

\begin{proof}
We observe that for every matrix $A\in M_n(\mathbb Z)$, one has:
$$
E_{1,1} A E_{1,1} =
\begin{pmatrix}
A(1,1) & 0 & \dots & 0 \\
0 & 0 & \dots & 0 \\
\vdots & \vdots & \ddots & \vdots \\
0 & 0 & \dots & 0 
\end{pmatrix} = A(1,1)E_{1,1}.
$$
These matrices build a ring with $1$, which is a subring of the ring without $1$ given by $(M_n(\mathbb Z), +, \cdot, 0)$. For every polynomial $f \in \mathbb Z[x_1, x_2, \dots, x_k]$ we construct a polynomial $\tilde f \in \mathbb Z[X_1, X_2, \dots, X_k, E]$ as follows. To every monomial $x_{i_1}x_{i_2} \dots x_{i_s}$ of $f$ we construct the new monomial $$EX_{i_1}EX_{i_2}E\cdots  EX_{i_s}E.$$ Between every two variables, a copy of the new non-commutative variable $E$ is inserted. Also, copies of $E$ are written as prefix and as suffix of the monomial. The intention is to interpret $E$ as the parameter $E_{1,1} \in M_n(\{0,1\})$ and to apply the fact that $E_{1,1}^2 = E_{1,1}$. 
Suppose that we denote $X_i(1,1)$ as new numeric variable $x_i$. It follows that: 
$$EX_{i_1}EX_{i_2}E\cdots EX_{i_s}E = (EX_{i_1}E)(EX_{i_2}E) \cdots (EX_{i_s}E) = x_{i_1}x_{i_2} \cdots x_{i_s}E.$$
Also, the free term $kI_n$ is replaced in $\tilde f$ by $kE$. 

We observe that the equation $f(x_1, x_2, \dots, x_k) = 0$ has a solution in $\mathbb N$ if and only if the equation $\tilde f(X_1, X_2, \dots, X_k, E)=0$ has a solution in $M_n(\mathbb N)$ with $E = E_{1,1}$. Also,  $f(x_1, x_2, \dots, x_k) = 0$ has a solution in $\mathbb Z$ if and only if the equation $\tilde f(X_1, X_2, \dots, X_k, E)=0$ has a solution in $M_n(\mathbb Z)$ with $E = E_{1,1}$. 

Indeed, consider $\tilde f$ as polynomial in variables $X_1, X_2, \dots, X_k$ with coefficients and parameters in $M_n(\mathbb Z)$. If the tuple $(a_1, a_2, \dots a_k) \in \mathbb N^k$ is a solution of the equation $f(x_1, x_2, \dots, x_k) = 0$, then $(a_1E_{1,1}, a_2E_{1,1}, \dots, a_kE_{1,1})$ is a solution of the equation $\tilde f(X_1, X_2, \dots,X_k) = 0$ in $M_n(\mathbb N)$. On the other hand, if the tuple $(A_1, A_2, \dots, A_k) \in M_n(\mathbb N)^k$ satisfies $\tilde f(X_1, X_2, \dots, X_n)=0$, then the tuple $(\pi_{1,1}(A_1), \pi_{1,1}(A_2), \dots, \pi_{1,1}(A_k)) \in \mathbb N^k$  satisfies $f(x_1, x_2, \dots, x_k) = 0$. All remains true if we replace $\mathbb N$ with $\mathbb Z$. 
\end{proof}

It is known that any matrix $X \in M_n(\mathbb Z)$ is equal to $aI_n$ for some $a \in \mathbb Z$ if\footnote{Indeed, if $X$ commutes with all $E_{i,i}$, then it is a diagonal matrix. If, moreover, it commutes with the all-one matrix $\sum E_{i,j}$, then it is a scalar matrix.}  and only if
$$\forall \,A\,\,\,\, XA = AX.$$
 If we combine this information with the fact that Hilbert's Tenth Problem has a negative answer for the ring $\mathbb Z$, we conclude that the $\exists \forall$-theory of $M_n(\mathbb Z)$ is undecidable. 

Non-commutative equations $P(x_1, x_2, \dots, x_k) = 0$ with $P \in \mathbb Z[X_1, X_2, \dots, X_k]$ can be always written in the equivalent form:
$$ P_1(x_1, x_2, \dots, x_k) = P_2(x_1, x_2, \dots, x_k), $$
where on both sides one has only additions and multiplications. This is so, because negative monomials from the left-hand side are moved with changed sign to the right-hand side. 

\begin{proposition}\label{prop}
Let $(\mathcal A, +, \cdot, 0, 1)$ and $(\mathcal B, +, \cdot, 0, 1)$ be two infinite structures with the same signature. Suppose that there exist two homomorphisms $\alpha: \mathcal A \rightarrow \mathcal B$ and $\beta : \mathcal B \rightarrow \mathcal A$ according to this signature. Then the following are equivalent: (1) it is decidable whether  equations from $\mathbb Z[x_1, x_2, \dots, x_k]$ have solutions in $\mathcal A$ and (2) it is decidable whether  equations from $\mathbb Z[x_1, x_2, \dots, x_k]$ have solutions in $\mathcal B$.
\end{proposition}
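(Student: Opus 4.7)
The plan is to prove the stronger statement that the set of equations from $\mathbb Z[x_1, \ldots, x_k]$ solvable in $\mathcal A$ actually coincides with the set of those solvable in $\mathcal B$. From this, the equivalence of (1) and (2) is immediate: the identity map on equations serves as a computable reduction in both directions.

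First, I would use the reformulation noted by the author just before the proposition: any equation $P(x_1, \ldots, x_k) = 0$ with $P \in \mathbb Z[X_1, \ldots, X_k]$ can be rewritten equivalently as
$$P_1(x_1, \ldots, x_k) = P_2(x_1, \ldots, x_k),$$
where $P_1$ and $P_2$ are built from the variables using only $+$, $\cdot$, and the constants $0,1$, with integer coefficients unfolded as iterated sums of $1$. After this normalization, both sides are terms over the common signature $(+, \cdot, 0, 1)$, which is the signature respected by $\alpha$ and $\beta$.

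Next, I would apply the standard preservation property of homomorphisms: for any term $Q$ in this signature and any tuple $(a_1, \ldots, a_k) \in \mathcal A^k$,
$$\alpha\bigl(Q^{\mathcal A}(a_1, \ldots, a_k)\bigr) = Q^{\mathcal B}\bigl(\alpha(a_1), \ldots, \alpha(a_k)\bigr),$$
and symmetrically for $\beta$. Hence if $(a_i) \in \mathcal A^k$ solves $P_1 = P_2$ in $\mathcal A$, then $(\alpha(a_i)) \in \mathcal B^k$ solves it in $\mathcal B$; conversely, $\beta$ carries any solution in $\mathcal B$ back to one in $\mathcal A$. The two solvability sets therefore coincide, so any decision procedure for one applies verbatim to the other.

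The argument has no real obstacle; the single point requiring care is that $\alpha$ and $\beta$ preserve only $+, \cdot, 0, 1$ and not subtraction, which is precisely why one must first normalize $P = 0$ into $P_1 = P_2$ with non-negative coefficients. Notably, neither the infiniteness of $\mathcal A, \mathcal B$ nor any injectivity or surjectivity of $\alpha, \beta$ is used; the infiniteness hypothesis seems stated only to rule out the degenerate finite case, in which decidability is trivially automatic.
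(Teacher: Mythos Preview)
Your proof is correct and follows the same approach as the paper: show that solvability in $\mathcal A$ and $\mathcal B$ coincide by pushing solutions forward along $\alpha$ and $\beta$. You are in fact more careful than the paper, since you make explicit why the rewriting $P=0 \leadsto P_1=P_2$ is needed (the homomorphisms are only assumed to respect $+,\cdot,0,1$, not subtraction), a point the paper's proof leaves implicit.
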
 

\begin{proof}
An equation is solvable in $\mathcal A$ if and only if it is solvable in $\mathcal B$. Indeed, in the first case, if $(x_1, x_2, \dots, x_k) \in \mathcal A^n$ is a solution of an equation in $\mathcal A$, then $(\alpha(x_1), \alpha(x_2), \dots, \alpha(x_k))\in \mathcal B^n$ is a solution of the same equation  in $\mathcal B$. From $\mathcal B$ to $\mathcal A$ the transfer is done with $\beta$. 
\end{proof}

Using \cref{prop} we can prove that equation solvability is undecidable for many subrings of $M_n(\mathbb Z)$. In the next examples we take always $\mathcal A = \mathbb Z$ and $\alpha(k) = kI_n$. The structures $\mathcal B$ and the homomorphisms $\beta$ can be chosen as follows:

\begin{enumerate}
\item $\mathcal B = D_n(\mathbb Z)$, the diagonal matrices with integer coefficients, where the elements on the main diagonal are free to take any value, but all other elements are $0$. Any projection $\pi_{k,k}$ is good to play the role of $\beta$.
\item $\mathcal B = T_n(\mathbb Z)$, the upper triangular matrices with integer coefficients. They are defined by the condition that for all $1 \leq i, j \leq n$, if $i > j$, then $A(i, j)=0$. Again any projection  $\pi_{k,k}$ is good to play the role of $\beta$. 
\item  $\mathcal B = \Sigma_i$ defined by: $1 \leq k \leq n \,\wedge \, k \neq i \rightarrow A(k,i) = A(i,k) = 0$. Only $A(i,i)$ is allowed to be non-zero on both the $i$-th row and the $i$-th column; $\beta = \pi_{i,i}$. 
\item  $\mathcal B = \Gamma_i$ defined by: $1 \leq k \leq n \,\wedge \, k \neq i \rightarrow A(k,i) = 0$. Only $A(i,i)$ is allowed to be non-zero on the $i$-th column; $\beta = \pi_{i,i}$. 
\item  $\mathcal B = \Lambda_i$ defined by: $1 \leq k \leq n \,\wedge \, k \neq i \rightarrow A(i,k) = 0$. Only $A(i,i)$ is allowed to be non-zero on the $i$-th row; $\beta = \pi_{i,i}$.
\item $\mathcal B = R_i$ defined by: $$1 \leq k,j \leq n\, \wedge \, k \geq i \, \wedge \, j \leq i \,\wedge \, (k,j) \neq (i,i) \rightarrow A(k,j) = 0.$$
Only $A(i,i)$ is allowed to be non-zero in the rectangle $[i,n] \times [1,i]$; $\beta = \pi_{i,i}$. 
\item $\mathcal B = RR_i$ defined by: 
 $$1 \leq k,j \leq n\, \wedge \, [ (k \geq i \, \wedge \, j \leq i) \, \vee  \,(k \leq i \, \wedge \, j \geq i)\,]\, \wedge \, (k,j) \neq (i,i) \rightarrow A(k,j) = 0.$$
Only $A(i,i)$ is allowed to be non-zero in the union of two rectangles $([i,n] \times [1,i]) \cup ([1,i] \times [i,n])$; $\beta = \pi_{i,i}$.
\end{enumerate}

\section{On equations without free term}\label{sect:nofreeterm}

One can define the following problems:

$$ H_n(\mathbb Z) = \{ f \in \mathbb Z[X_1, X_2, \dots, X_k]\,|\,f(\lambda X_1, \lambda X_2, \dots, \lambda X_k) = \lambda^{\deg(f)} f(X_1, X_2, \dots, X_k) \,\wedge  $$ $$ \wedge \, \left (\exists \,(Y_1, Y_2, \dots, Y_k) \in M_n(\mathbb Z)^k\,\,\,\,(Y_1, Y_2, \dots, Y_k) \neq (0, 0, \dots, 0) \,\wedge \, f(Y_1, Y_2, \dots, Y_k) = 0 \right )\}, $$ 

$$ F_n(\mathbb Z) = \{ f \in \mathbb Z[X_1, X_2, \dots, X_k]\,|\,f(0, 0, \dots, 0) = 0 \,\wedge  $$ $$ \wedge \, \left (\exists \,(Y_1, Y_2, \dots, Y_k) \in M_n(\mathbb Z)^k\,\,\,\,(Y_1, Y_2, \dots, Y_k) \neq (0, 0, \dots, 0) \,\wedge \, f(Y_1, Y_2, \dots, Y_k) = 0 \right )\}, $$ 

The first problem is about finding non-trivial solutions for {\bf homogeneous}  equations. The second one is about finding non-trivial solutions for equations {\bf without free term}. Both problems can be equally defined for $\mathbb N$ and $\mathbb Q$ instead of $\mathbb Z$. Also, all these problems can be put in the version $SH_n(\mathbb Z)$ and $SF_n(\mathbb Z)$ for systems of homogeneous equations respectively for systems of equations without free terms, and this can be done as well for  $\mathbb N$ and $\mathbb Q$ instead of $\mathbb Z$. 

While the decidability of $H_1(\mathbb Z)$ is equivalent with the decidability of $Dph_1(\mathbb Q)$ (see Matiyasevich \cite{Matiyasevich}) and is so far an open problem, for all other problems defined here even less information is known. However, an observation can be done:

\begin{proposition}
Let $( G_n )$ be one of the $12$ problems defined above. Then the sequence $( G_n )$ is monotone:
$$ n \leq m \longleftrightarrow G_n \subseteq G_m .$$
\end{proposition}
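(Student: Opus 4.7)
For the easy direction $n \le m \Rightarrow G_n \subseteq G_m$, I would use the block embedding
\[ \iota : M_n(R) \to M_m(R), \qquad \iota(A) = \begin{pmatrix} A & 0 \\ 0 & 0 \end{pmatrix},\]
which is an injective homomorphism for $(+, \cdot, 0)$ (it does not preserve $1$, but that is harmless here). The crucial point is that every polynomial in the twelve problems has zero constant term: a homogeneous polynomial of positive degree has no constant monomial, and the hypothesis $f(0,\dots,0) = 0$ excludes one by assumption. Hence $f(\iota(Y_1), \dots, \iota(Y_k)) = \iota(f(Y_1, \dots, Y_k))$ in $M_m(R)$, so a non-trivial solution in $M_n(R)^k$ lifts to a non-trivial one in $M_m(R)^k$ by injectivity of $\iota$. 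For systems this is applied equation by equation, uniformly in $R \in \{\mathbb{N}, \mathbb{Z}, \mathbb{Q}\}$.

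For the reverse $G_n \subseteq G_m \Rightarrow n \le m$, for each pair $m<n$ I would exhibit a witness in $G_n \setminus G_m$, with \cref{lemmanosmallerdim} as the central tool.

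In the $F$ and $SF$ cases, take $p(X) = X^{n+1} - 2X$. It has no free term, and the companion matrix $A$ from \cref{lemmasolution} yields a non-trivial solution in $M_n$ since $A(A^n - 2I_n) = 0$. For $m<n$, a non-trivial $X \in M_m(\mathbb{Q})$ with $X(X^n - 2I)=0$ is decomposed via Fitting's lemma as $X = X_0 \oplus X_1$ with $X_0$ nilpotent and $X_1$ invertible; the equation restricted to each summand forces $X_0 = 0$ (from $X_0^{n+1} = 2X_0$ together with nilpotency) and $X_1^n = 2I$ on an invariant subspace, contradicting \cref{lemmanosmallerdim}. The same witness and argument transfer uniformly to $\mathbb{N}$ and $\mathbb{Z}$.

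The main obstacle is the $H$ and $SH$ cases. The natural homogenization $X^{n+1} - 2XY^n$ admits the spurious non-trivial solution ($Y = 0$, $X$ nilpotent) in every $M_m$ with $m \ge 2$, so it does not discriminate dimensions. For $SH$ one would adjoin further homogeneous equations designed to prevent $Y$ from being nilpotent --- for instance, a commutativity constraint $XY - YX = 0$ combined with auxiliary-variable relations that force $Y$ to act invertibly on a distinguished subspace --- so that the $F$-case argument applies there. For the single-equation $H$ case, the system must then be compressed into one homogeneous polynomial via a product-and-sum construction, analogous in spirit to the commutator trick of \cref{lemmaembedding}. Making this construction precise and uniform across $\mathbb{N}, \mathbb{Z}, \mathbb{Q}$ is the technically most delicate step of the proof.
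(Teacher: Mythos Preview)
Your forward implication $n \le m \Rightarrow G_n \subseteq G_m$ is exactly the paper's argument: the same upper-left-corner embedding (called $\gamma$ there), the same observation that it is a ring-without-$1$ homomorphism, and the same remark that the absence of a free term is what makes this sufficient.

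Where you diverge from the paper is in the reverse implication. The paper's proof, despite the biconditional in the statement, in fact treats \emph{only} the direction $n \le m \Rightarrow G_n \subseteq G_m$; no argument for $G_n \subseteq G_m \Rightarrow n \le m$ is given there at all. So everything you write about the converse is additional to the paper rather than comparable with it. Your $F$/$SF$ witness $p(X)=X^{n+1}-2X$ together with the Fitting decomposition is a clean and correct strengthening: the nilpotent summand is killed by $X_0^{n+1}=2X_0$, and the invertible summand reduces to $X_1^n=2I$ on a space of dimension $\le m<n$, which is excluded by the minimal-polynomial argument of \cref{lemmanosmallerdim}; and a solution over $\mathbb N$ or $\mathbb Z$ is a fortiori one over $\mathbb Q$, so non-existence over $\mathbb Q$ suffices.

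The genuine gap is the one you flag yourself: for $H$ and $SH$ you do not have a working witness. The homogenization $X^{n+1}-2XY^n$ indeed fails, and your sketch of ``adjoin commutator constraints and then compress to a single homogeneous equation'' is a programme, not a proof; in particular it is not clear that any product-and-sum trick can exclude the nilpotent-$Y$ escape uniformly in $m$ while remaining homogeneous. Since the paper itself supplies nothing for this direction, your $F$/$SF$ argument already goes strictly beyond it, but the proposition as stated (a full biconditional for all twelve problems) is not established either by the paper or by your proposal.
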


\begin{proof}
Keep in mind the problem $F_n(\mathbb Z)$. For all the other problems mentioned, the proof is similar. For $n \leq m$, we consider the application:
$$\gamma : M_n(\mathbb Z) \rightarrow M_m(\mathbb Z), \,\,\,\,\,\, \gamma(A) = \begin{pmatrix} A & ^T O_1 \\ O_1 & O_2 \end{pmatrix}.$$
In this block representation, $O_1$ is the $(m-n) \times n$ matrix consisting of zeros, $^T O_1$ is the $n \times (m-n)$ matrix consisting of zeros, and $O_2$ is the $(m-n) \times (m-n)$ matrix consisting of zeros. We observe that $\gamma$ is an injective homomorphism of rings without $1$. If $f(0, 0, \dots, 0) = 0$, and $(Y_1, Y_2, \dots, Y_k) \in M_n(\mathbb Z)^k$, $(Y_1, Y_2, \dots, Y_k) \neq (0,0,\dots , 0)$ and $f(Y_1, Y_2, \dots, Y_k) =0$, then $(\gamma(Y_1), \gamma( Y_2), \dots, \gamma(Y_k) )  \in M_m(\mathbb Z)^k$, $(\gamma(Y_1), \gamma( Y_2), \dots, \gamma(Y_k) )  \neq (0,0,\dots , 0)$ and $f(\gamma(Y_1), \gamma( Y_2), \dots, \gamma(Y_k) )  = 0$. 
\end{proof}

\end{document}